\newtheorem{thrm}{Theorem}[section]
\newtheorem{exam}[thrm]{Example}
\newtheorem{cor}[thrm]{Corollary}
\theoremstyle{definition}
\newtheorem{remark}[thrm]{Remark}
\journal{...}
\begin{document}

\begin{frontmatter}


\cortext[cor1]{Corresponding author (+903562521616-3087)}

\title{Determinantal and Permanental Representation of Generalized Fibonacci Polynomials}


\author[rvt]{Adem Sahin}
\ead{adem.sahin@gop.edu.tr} {\author[rvt]{Kenan Kaygisiz
\corref{cor1}}} \ead{kenan.kaygisiz@gop.edu.tr}
\address[rvt]{Department of Mathematics, Faculty of Arts and Sciences,
Gaziosmanpa\c{s}a University, 60250 Tokat, Turkey}

\begin{abstract}
In this paper, we give some determinantal and permanental
representations of Generalized Fibonacci Polynomials by using
various Hessenberg matrices. These results are general form of determinantal and permanental representations of $k$ sequences of the generalized order-$k$ Fibonacci and Pell numbers.
\end{abstract}
\begin{keyword}
$k$ sequences of the generalized order-$k$ Fibonacci numbers, $k$
sequences of the generalized order-$k$ Pell numbers, generalized
Fibonacci Polynomials, Hessenberg Matrix.
\end{keyword}

\end{frontmatter}



\section{Introduction}

Fibonacci numbers, Pell numbers and their generalizations have been studying
for a long time. One of these generalizations was given by Miles in 1960.

Miles [8] defined generalized order-$k$ Fibonacci numbers(GO$k$F) as,%
\begin{equation}
f_{k,n}=\sum\limits_{j=1}^{k}f_{k,n-j}\
\end{equation}%
for $n>k\geq 2$, with boundary conditions: $f_{k,1}=f_{k,2}=f_{k,3}=\cdots
=f_{k,k-2}=0$ and $f_{k,k-1}=f_{k,k}=1.$\newline

Er [2] defined $k$ sequences of generalized order-$k$ Fibonacci numbers ($k$%
SO$k$F) as; for $n>0,$ $1\leq i\leq k$%
\begin{equation}
f_{k,n}^{\text{ }i}=\sum\limits_{j=1}^{k}c_{j}f_{k,n-j}^{\text{ }i}\ \
\end{equation}%
with boundary conditions for $1-k\leq n\leq 0,$

\begin{equation*}
f_{k,n}^{\text{ }i}=\left\{
\begin{array}{l}
1\text{ \ \ \ \ \ if \ }i=1-n, \\
0\text{ \ \ \ \ \ otherwise,}%
\end{array}%
\right.
\end{equation*}%
where $c_{j}$ $(1\leq j\leq k)$ are constant coefficients, $f_{k,n}^{\text{ }%
i}$ is the $n$-th term of $i$-th sequence of order-$k$ generalization. For $%
c_{j}=1$, $k$-th sequence of this generalization involves the Miles
generalization(1) for $i=k,$ i.e.%
\begin{equation*}
f_{k,n}^{k}=f_{k,k+n-2}.
\end{equation*}

Kili\c{c} and Ta\c{c}c\i \lbrack 4] defined $k$ sequences of generalized
order-$k$ Pell numbers ($k$SO$k$P) as; for $n>0,$ $1\leq i\leq k$%
\begin{equation}
p_{k,n}^{\text{ }i}=2p_{k,n-1}^{\text{ }i}+p_{k,n-2}^{\text{ }i}+\cdots +\
p_{k,n-k}^{\text{ }i}\
\end{equation}%
with initial conditions for $1-k\leq n\leq 0,$

\begin{equation*}
p_{k,n}^{\text{ }i}=\left\{
\begin{array}{l}
1\text{ \ \ \ \ \ if \ }i=1-n, \\
0\text{ \ \ \ \ \ otherwise,}%
\end{array}%
\right.
\end{equation*}%
where $p_{k,n}^{\text{ }i}$ is the $n$-th term of $i$-th sequence of order $%
k $ generalization.

Kaygisiz and \c{S}ahin [3] defined $k$ sequences of the generalized order-$k$
Van der Laan numbers($k$SO$k$V) as; for $n>0,$ $1\leq i\leq k$%
\begin{equation}
v_{k,n}^{i}=\sum\limits_{j=2}^{k}v_{k,n-j}^{i}\
\end{equation}%
with initial conditions for $1-k\leq n\leq 0,$

\begin{equation*}
v_{k,n}^{i}=\left\{
\begin{array}{lc}
1\ \ \ \ \ \ \ \ \text{ \ if }i-n=k, &  \\
0\ \text{ \ \ \ \ \ \ \ \ otherwise} &
\end{array}%
\right.
\end{equation*}%
for $1-k\leq n\leq 0,$ where $v_{k,n}^{i}$ is the $n$-th term of $i$-th
sequence.

\bigskip

MacHenry [5] defined generalized Fibonacci polynomials $(F_{k,n}(t))$ where $%
t_{i}$ $(1\leq i\leq k)$ are constant coefficients of the core polynomial%
\begin{equation*}
P(x;t_{1},t_{2},\ldots ,t_{k})=x^{k}-t_{1}x^{k-1}-\cdots -t_{k},
\end{equation*}%
which is denoted by the vector
\begin{equation*}
t=(t_{1},t_{2},\ldots ,t_{k}).
\end{equation*}%
$F_{k,n}(t)$ is defined inductively by
\begin{eqnarray}
F_{k,n}(t) &=&0,\text{ }n<1 \\
F_{k,1}(t) &=&1  \notag \\
F_{k,2}(t) &=&t_{1}  \notag \\
F_{k,n+1}(t) &=&t_{1}F_{k,n}(t)+\cdots +t_{k}F_{k,n-k+1}(t).  \notag
\end{eqnarray}%
For example the generalized Fibonacci polynomials for $k=4$ and $k=5$ are;%
\begin{equation*}
1,t_{1},t_{2}+t_{1}^{2},t_{3}+2t_{1}t_{2}+t_{1}^{3},t_{4}+2t_{1}t_{3}+t_{2}^{2}+t_{1}^{4}+3t_{1}^{2}t_{2},...
\end{equation*}

\bigskip\ and%
\begin{eqnarray*}
&&1,\text{ }t_{1},\text{ }t_{2}+t_{1}^{2},\text{ }%
t_{3}+2t_{1}t_{2}+t_{1}^{3},\text{ }%
t_{4}+2t_{1}t_{3}+t_{2}^{2}+t_{1}^{4}+3t_{1}^{2}t_{2},\text{ } \\
&&t_{1}^{5}+4t_{1}^{3}t_{2}+3t_{1}^{2}t_{3}+3t_{1}t_{2}^{2}+2t_{1}t_{4}+2t_{2}t_{3}+t_{5},
\\
&&2t_{1}t_{5}+2t_{2}t_{4}+6t_{1}t_{2}t_{3}+t_{2}^{3}+t_{3}^{2}+t_{1}^{6}+%
\allowbreak
3t_{1}^{2}t_{4}+4t_{1}^{3}t_{3}+5t_{1}^{4}t_{2}+6t_{1}^{2}t_{2}^{2},...
\end{eqnarray*}%
respectively.

\bigskip MacHenry studied on these polinomials and obtain very useful
properties of these polynomials in [6,7].

\begin{remark}
$f_{k,n},$ $f_{k,n}^{\text{ }i},$ $p_{k,n}^{\text{ }i}$, $v_{k,n}^{i}$ and $%
F_{k,n}(t)$ are GO$k$F (1), $k$SO$k$F (2), $k$SO$k$P (3), $k$SO$k$V (4) and
generalized Fibonacci polynomials (5) respectively, then \newline
$i)$ substituting $c_{j}=t_{i}$ in (2) and generalized Fibonacci
polynomials, for $1\leq i, j\leq k$ we obtain%
\begin{equation*}
F_{k,n}(t)=f_{k,n-1}^{\text{ }1},
\end{equation*}%
$ii)$ substituting $t_{1}=2$ and $t_{i}=1$ for $2\leq i\leq k$ in
generalized Fibonacci polynomials, we obtain%
\begin{equation*}
F_{k,n}(t)=p_{k,n}^{\text{ }k},
\end{equation*}%
$iii)$ substituting $t_{1}=0$ and $t_{i}=1$ for $2\leq i\leq k$ in
generalized Fibonacci polynomials, we obtain
\begin{equation*}
F_{k,n}(t)=v_{k,n}^{k},
\end{equation*}%
$iv)$ substituting $t_{i}=1$ in generalized Fibonacci polynomials, we obtain
\begin{equation*}
F_{k,n}(t)=f_{k,k+n-2}.
\end{equation*}
\end{remark}

Minc [9] found Hessenberg matrix whose permanent are subscripted generalized
order-$k$ Fibonacci numbers. Ocal [10] gave various Hessenberg matrices
whose determinants and permanents are subscripted generalized order-$k$
Fibonacci numbers.

In this paper we derive determinantal and permanental representation of
generalized Fibonacci polynomials using various Hessenberg matrices.

\section{The determinantal representations}

\bigskip

An $n\times n$ matrix $A_{n}=(a_{ij})$ is called lower Hessenberg matrix if $%
a_{ij}=0$ when $j-i>1$ i.e.,%
\begin{equation*}
A_{n}=\left[
\begin{array}{ccccc}
a_{11} & a_{12} & 0 & \cdots & 0 \\
a_{21} & a_{22} & a_{23} & \cdots & 0 \\
a_{31} & a_{32} & a_{33} & \cdots & 0 \\
\vdots & \vdots & \vdots &  & \vdots \\
a_{n-1,1} & a_{n-1,2} & a_{n-1,3} & \cdots & a_{n-1,n} \\
a_{n,1} & a_{n,2} & a_{n,3} & \cdots & a_{n,n}%
\end{array}%
\right]
\end{equation*}

\begin{thrm}
$\bigskip \lbrack 1]$ $A_{n}$ be the $n\times n$ lower Hessenberg matrix for
all $n\geq 1$ and define $\det (A_{0})=1,$ then,%
\begin{equation*}
\det (A_{1})=a_{11}
\end{equation*}%
and for $n\geq 2$%
\begin{equation}
\det (A_{n})=a_{n,n}\det
(A_{n-1})+\sum\limits_{r=1}^{n-1}((-1)^{n-r}a_{n,r}\prod%
\limits_{j=r}^{n-1}a_{j,j+1}\det (A_{r-1})).
\end{equation}
\end{thrm}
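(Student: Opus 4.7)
The plan is to prove the identity by Laplace expansion along the bottom row of $A_n$. The base case $n=1$ is immediate from $\det(A_1) = a_{11}$, so assume $n \geq 2$ and write
$$\det(A_n) = \sum_{r=1}^{n}(-1)^{n+r}a_{n,r}M_{n,r},$$
where $M_{n,r}$ is the $(n,r)$-minor. The first step is to handle the diagonal term: deleting row $n$ and column $n$ returns exactly the matrix $A_{n-1}$, so the $r=n$ summand contributes $a_{n,n}\det(A_{n-1})$, matching the leading term of the claimed formula.

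Next, for each $r$ with $1 \leq r \leq n-1$, I would analyze the structure of the $(n-1)\times(n-1)$ matrix obtained from $A_n$ by deleting row $n$ and column $r$. It splits naturally into four blocks according to whether the original row index is $\leq r-1$ or $\geq r$, and whether the original column index is $\leq r-1$ or $\geq r+1$. The top-left block coincides with $A_{r-1}$. The top-right block consists of entries $a_{i,j}$ with $i \leq r-1$ and $j \geq r+1$, which all satisfy $j-i > 1$ and hence vanish by the Hessenberg property. Therefore the minor factors as the determinant of $A_{r-1}$ times the determinant of the bottom-right block.

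The bottom-right block $B_r$ consists of rows $r, r+1, \ldots, n-1$ and columns $r+1, r+2, \ldots, n$ of $A_n$. Re-indexing its rows and columns by $\tilde{\imath},\tilde{\jmath} \in \{1,\dots,n-r\}$, the $(\tilde{\imath},\tilde{\jmath})$-entry is $a_{r-1+\tilde{\imath},\,r+\tilde{\jmath}}$, whose column-minus-row index in $A$ equals $\tilde{\jmath}-\tilde{\imath}+1$. By the Hessenberg condition this vanishes whenever $\tilde{\jmath} > \tilde{\imath}$, so $B_r$ is lower triangular; its diagonal entries are $a_{r,r+1},\,a_{r+1,r+2},\ldots,a_{n-1,n}$, giving $\det(B_r)=\prod_{j=r}^{n-1}a_{j,j+1}$. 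Consequently $M_{n,r} = \det(A_{r-1})\prod_{j=r}^{n-1}a_{j,j+1}$.

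Substituting back into the cofactor expansion and using $(-1)^{n+r}=(-1)^{n-r}$ yields the stated formula. The only non-routine step is verifying the block-triangular structure of the minor and, in particular, the lower triangularity of $B_r$; once this is established the rest is bookkeeping. An equivalent route would be induction on $n$, expanding $\det(A_n)$ to relate it to $\det(A_{n-1})$ directly, but the one-shot Laplace expansion seems the cleanest presentation.
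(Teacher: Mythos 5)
Your proof is correct. The paper itself offers no proof of this statement---it is quoted from reference [1] (Cahill et al.)---so there is nothing internal to compare against; the standard argument in that source is an induction on $n$ via cofactor expansion, which is essentially the route you take, just packaged as a single Laplace expansion along the bottom row. Your key observations all check out: the $(n,r)$-minor is block lower triangular because entries $a_{i,j}$ with $i\leq r-1$ and $j\geq r+1$ satisfy $j-i\geq 2$ and vanish; the bottom-right block is lower triangular with diagonal $a_{r,r+1},\dots,a_{n-1,n}$ since its $(\tilde{\imath},\tilde{\jmath})$-entry has column-minus-row index $\tilde{\jmath}-\tilde{\imath}+1$ in $A_n$; and the sign bookkeeping $(-1)^{n+r}=(-1)^{n-r}$ is right. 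This is a complete and self-contained proof of the cited lemma.
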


\bigskip

\begin{thrm}
Let $k\geq 2$ be an integer$,$ $F_{k,n}(t)$ be the generalized Fibonacci
Polynomial (5) and $Q_{k,n}=(q_{rs})$ $n\times n$ Hessenberg matrix$,$ where%
\begin{equation*}
q_{rs}=\left\{
\begin{array}{c}
i^{\left\vert r-s\right\vert }.\frac{t_{r-s+1}}{t_{2}^{(r-s)}}\text{ \ \ \ \
\ \ \ \ \ \ \ \ \ \ \ \ \ \ if \ }-1\leq r-s<k\text{ },\text{\ \ \ \ \ \ \ \
\ \ \ \ \ \ \ \ \ \ \ \ \ \ \ \ \ \ } \\
0\text{ \ \ \ \ \ \ \ \ \ \ \ \ \ \ \ \ \ \ \ \ \ \ otherwise\ \ \ \ \ \ \ \
\ \ \ \ \ \ \ \ \ \ \ \ \ \ \ \ \ \ \ \ \ \ \ \ \ \ }%
\end{array}%
\right.
\end{equation*}%
i.e.,%
\begin{equation}
Q_{k,n}=\left[
\begin{array}{cccccc}
t_{1} & it_{2} & 0 & 0 & \cdots  & 0 \\
i & t_{1} & it_{2} & 0 & \cdots  & 0 \\
i^{2}\frac{t_{3}}{t_{2}^{2}} & i & t_{1} & it_{2} & \cdots  & 0 \\
\vdots  & \vdots  & \vdots  & \vdots  &  & \vdots  \\
i^{k-1}\frac{t_{k}}{t_{2}^{k-1}} & i^{k-2}\frac{t_{k-1}}{t_{2}^{k-2}} &
i^{k-3}\frac{t_{k-2}}{t_{2}^{k-3}} & i^{k-4}\frac{t_{k-3}}{t_{2}^{k-4}} &
\cdots  & 0 \\
0 & i^{k-1}\frac{t_{k}}{t_{2}^{k-1}} & i^{k-2}\frac{t_{k-1}}{t_{2}^{k-2}} &
i^{k-3}\frac{t_{k-2}}{t_{2}^{k-3}} & \cdots  & 0 \\
& \vdots  & \vdots  & \vdots  & \ddots  &  \\
0 & 0 & 0 & \cdots  & \cdots  & t_{1}%
\end{array}%
\right]   \label{kuka}
\end{equation}%
then%
\begin{equation*}
\det (Q_{k,n})=F_{k,n+1}(t)
\end{equation*}%
where $t_{0}=1$ and $i=\sqrt{-1}.$
\end{thrm}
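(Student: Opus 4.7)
The plan is to prove the identity by induction on $n$, using the Hessenberg determinant expansion formula (Theorem 2.1) to reduce $\det(Q_{k,n})$ to a linear recurrence that matches the defining recurrence (5) of $F_{k,n+1}(t)$.

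First I would verify the base cases: $\det(Q_{k,0})=1=F_{k,1}(t)$ by the convention, and $\det(Q_{k,1})=t_1=F_{k,2}(t)$ directly from the $(1,1)$-entry of $Q_{k,n}$. Then, for the inductive step, I apply formula (6) from Theorem 2.1 to $A_n=Q_{k,n}$. Two features of $Q_{k,n}$ make the resulting expression tractable. First, every superdiagonal entry is the constant $a_{j,j+1}=it_2$, so
\begin{equation*}
\prod_{j=r}^{n-1}a_{j,j+1}=(it_2)^{n-r}.
\end{equation*}
Second, the last-row entries satisfy $a_{n,r}=i^{n-r}t_{n-r+1}/t_2^{n-r}$ whenever $n-r<k$ and $a_{n,r}=0$ otherwise. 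Substituting into (6) yields
\begin{equation*}
\det(Q_{k,n})=t_1\det(Q_{k,n-1})+\sum_{r}(-1)^{n-r}\,i^{n-r}\,\frac{t_{n-r+1}}{t_2^{n-r}}\,(it_2)^{n-r}\det(Q_{k,r-1}),
\end{equation*}
where $r$ ranges over $\max(1,n-k+1)\le r\le n-1$.

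The key simplification is that the factor $(-1)^{n-r}\,i^{n-r}\,(it_2)^{n-r}=(-1)^{n-r}i^{2(n-r)}t_2^{n-r}=t_2^{n-r}$ exactly cancels the denominator $t_2^{n-r}$ coming from $a_{n,r}$; this is precisely why the scaling powers of $i$ and $t_2$ were inserted into the definition of $Q_{k,n}$. Re-indexing by $j=n-r+1$ turns the sum into
\begin{equation*}
\det(Q_{k,n})=t_1\det(Q_{k,n-1})+\sum_{j=2}^{\min(n,k)}t_j\det(Q_{k,n-j}).
\end{equation*}
By the inductive hypothesis $\det(Q_{k,m})=F_{k,m+1}(t)$ for all $m<n$, and using $F_{k,m}(t)=0$ for $m<1$ to extend the sum up to $j=k$, the right-hand side equals $t_1F_{k,n}(t)+t_2F_{k,n-1}(t)+\cdots+t_kF_{k,n-k+1}(t)=F_{k,n+1}(t)$, which completes the induction.

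The main obstacle is purely bookkeeping: keeping track of the sign convention in (6), checking that the factor $i^{n-r}\cdot i^{n-r}\cdot(-1)^{n-r}$ collapses to $+1$, and handling the truncated range of $r$ when $n<k$ so that only those $F_{k,n-j+1}(t)$ with nonnegative index contribute. No deeper algebraic machinery is needed beyond Theorem 2.1 and the defining recurrence (5).
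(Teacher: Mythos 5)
Your proposal is correct and follows essentially the same route as the paper: induction on $n$, expansion of $\det(Q_{k,n})$ via the Hessenberg formula (6), cancellation of the factor $(-1)^{n-r}i^{2(n-r)}t_2^{n-r}$ against the denominator $t_2^{n-r}$, and identification with the recurrence (5). The only (welcome) refinement is your explicit handling of the truncated range $2\le j\le\min(n,k)$ when $n<k$, which the paper's proof glosses over by writing the lower summation limit as $m-k+2$ even when that is nonpositive.
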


\bigskip

\begin{proof}
\bigskip To prove $\det (Q_{k,m})=F_{k,m+1}(t)$, we use the mathematical induction on $m$. The result is true for $%
m=1$ by hypothesis.

Using Theorem (2.1) we have
\begin{eqnarray*}
\det (Q_{k,m+1}) &=&q_{m+1,m+1}\det
(Q_{k,m})+\sum\limits_{r=1}^{m}\left(
(-1)^{m+1-r}q_{m+1,r}\prod\limits_{j=r}^{m}q_{j,j+1}\det
(Q_{k,r-1})\right)
\\
&=&t_{1}\det (Q_{k,m})+\sum\limits_{r=1}^{m-k+1}\left(
(-1)^{m+1-r}q_{m+1,r}\prod\limits_{j=r}^{m}q_{j,j+1}\det
(Q_{k,r-1})\right)
\\
&&+\sum\limits_{r=m-k+2}^{m}\left(
(-1)^{m+1-r}q_{m+1,r}\prod\limits_{j=r}^{m}q_{j,j+1}\det
(Q_{k,r-1})\right)
\\
&=&t_{1}\det (Q_{k,m})+\sum\limits_{r=m-k+2}^{m}\left(
(-1)^{m+1-r}q_{m+1,r}\prod\limits_{j=r}^{m}q_{j,j+1}\det
(Q_{k,r-1})\right)
\\
&=&t_{1}\det (Q_{k,m})+\sum\limits_{r=m-k+2}^{m}\left( (-1)^{m+1-r}.i^{m+1-r}%
\frac{t_{m-r+2}}{t_{2}^{(m-r+1)}}\prod\limits_{j=r}^{m}it_{2}\det
(Q_{k,r-1})\right) \\
&=&t_{1}\det (Q_{k,m}) \\
&&+\sum\limits_{r=m-k+2}^{m}\left( (-1)^{m+1-r}.i^{m+1-r}\frac{t_{m-r+2}}{%
t_{2}^{(m-r+1)}}.i^{m+1-r}.t_{2}^{(m-r+1)}\det (Q_{k,r-1})\right) \\
&=&t_{1}\det (Q_{k,m})+\sum\limits_{r=m-k+2}^{m}\left(
(-1)^{m+1-r}.i^{m+1-r}t_{m-r+2}.i^{m+1-r}.\det (Q_{k,r-1})\right) \\
&=&t_{1}\det (Q_{k,m})+\sum\limits_{r=m-k+2}^{m}t_{m-r+2}\det (Q_{k,r-1}) \\
&=&t_{1}\det (Q_{k,m})+t_{2}\det (Q_{k,m-1})+\cdots +t_{k}\det
(Q_{k,m-(k-1)})
\end{eqnarray*}%
From the hypothesis and the definition of generalized Fibonacci
polynomials we
obtain%
\begin{equation*}
\det (Q_{k,m+1})=t_{1}F_{k,m+1}+t_{2}F_{k,m}+\cdots
+t_{k}F_{k,m-(k-2)}=F_{k,m+2}.
\end{equation*}%
Therefore, the result is true for all positive integers.
\end{proof}

\bigskip

\begin{exam}
We obtain $5$-th generalized Fibonacci polynomial for $k=6$, using Theorem
2.2%
\begin{equation*}
F_{6,5}(t)=\det \left[
\begin{array}{cccc}
t_{1} & it_{2} & 0 & 0 \\
i & t_{1} & it_{2} & 0 \\
\frac{-t_{3}}{t_{2}^{2}} & i & t_{1} & it_{2} \\
\frac{-it_{4}}{t_{2}^{3}} & \frac{-t_{3}}{t_{2}^{2}} & i & t_{1}%
\end{array}%
\right] =t_{4}+2t_{1}t_{3}+t_{2}^{2}+t_{1}^{4}+3t_{1}^{2}t_{2}.
\end{equation*}
\end{exam}

\bigskip

\begin{cor}
\bigskip $\left[ 10\right] $ Let $k\geq 2$ be an integer$,$ $f_{k,n}$ be the
generalized order-$k$ Fibonacci numbers (1) and $C_{k,n}=(c_{rs})$ $n\times
n $ Hessenberg matrix$,$ where%
\begin{equation*}
c_{rs}=\left\{
\begin{array}{l}
i^{\left\vert r-s\right\vert }\text{ \ \ \ \ \ \ \ \ \ \ \ \ \ \ \ \ \ \ if
\ }-1\leq r-s<k\text{ },\text{\ \ \ \ \ \ \ \ \ \ \ \ \ \ \ \ \ \ \ \ \ \ \
\ \ \ } \\
0\text{ \ \ \ \ \ \ \ \ \ \ \ \ \ \ \ \ \ \ \ \ \ \ otherwise\ \ \ \ \ \ \ \
\ \ \ \ \ \ \ \ \ \ \ \ \ \ \ \ \ \ \ \ \ \ \ \ \ \ }%
\end{array}%
\right.
\end{equation*}%
then%
\begin{equation*}
\det (C_{k,n})=f_{k,k+n-1}
\end{equation*}%
where $i=\sqrt{-1}.$
\end{cor}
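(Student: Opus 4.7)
The plan is to obtain Corollary 2.4 as a direct specialization of Theorem 2.2, using the translation between generalized Fibonacci polynomials and generalized order-$k$ Fibonacci numbers recorded in Remark 1.1(iv). Concretely, I would substitute $t_1 = t_2 = \cdots = t_k = 1$ (with the convention $t_0 = 1$) everywhere in the statement of Theorem 2.2 and verify that the resulting Hessenberg matrix coincides with $C_{k,n}$, then read off the determinant identity from the theorem.

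First I would check the matrix identification. The $(r,s)$-entry of $Q_{k,n}$ is $i^{|r-s|} \cdot t_{r-s+1}/t_2^{(r-s)}$ when $-1 \le r-s < k$ and $0$ otherwise. Setting every $t_j = 1$ makes the factor $t_{r-s+1}/t_2^{(r-s)}$ equal to $1$ on the whole support (recalling $t_0 = 1$ handles the superdiagonal case $r - s = -1$), so the entries collapse to $i^{|r-s|}$ on exactly the same index set used to define $C_{k,n}$. Thus $Q_{k,n}\big|_{t_j = 1} = C_{k,n}$, and consequently $\det(C_{k,n}) = \det(Q_{k,n})\big|_{t_j = 1}$.

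Next I would apply Theorem 2.2, which gives $\det(Q_{k,n}) = F_{k,n+1}(t)$ as a polynomial identity in the $t_j$'s; specializing to $t_j = 1$ yields $\det(C_{k,n}) = F_{k,n+1}(1, 1, \ldots, 1)$. By Remark 1.1(iv), this specialization of the generalized Fibonacci polynomial equals $f_{k, k + (n+1) - 2} = f_{k, k+n-1}$, which is precisely the claim.

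There is no real obstacle here: the only thing to watch is the bookkeeping of the shift in indices (the theorem's "$n+1$" becoming "$k+n-1$" after applying part (iv) of Remark 1.1) and the convention $t_0 = 1$ used to interpret the superdiagonal entries $q_{r,r+1} = i \cdot t_0 = i$ in $Q_{k,n}$ so that, upon specialization, they match the superdiagonal entries $c_{r,r+1} = i^1 = i$ in $C_{k,n}$. Once these are aligned, the corollary is immediate.
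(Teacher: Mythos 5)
Your proposal is correct and follows exactly the paper's route: the paper's proof is the one-line "It is direct from Theorem 2.2 for $t_i=1$," and you have simply spelled out the two checks involved (that $Q_{k,n}$ specializes entrywise to $C_{k,n}$, and that Remark 1.1(iv) turns $F_{k,n+1}(1,\ldots,1)$ into $f_{k,k+n-1}$). The only cosmetic slip is describing the superdiagonal entry as $i\cdot t_0$ rather than $i\cdot t_0\cdot t_2=it_2$, which is harmless since both specialize to $i$.
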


\bigskip

\begin{proof}
\bigskip It is direct from Theorem 2.2 for $t_{i}=1.$
\end{proof}

\begin{thrm}
\bigskip Let $k\geq 2$ be an integer$,$ $F_{k,n}(t)$ be the generalized
Fionacci Polynomial (5) and $B_{k,n}=(b_{ij})$ be an $n\times n$ lower
Hessenberg matrix such that%
\begin{equation*}
b_{ij}=\left\{
\begin{array}{l}
-t_{2}\text{ \ \ \ \ \ \ \ \ \ \ \ if \ \ \ }j=i+1, \\
\frac{t_{i-j+1}}{t_{2}^{(i-j)}}\text{\ \ \ \ \ \ \ \ \ \ \ \ \ if\ \ \ \ \ }%
0\leq i-j<k\text{,} \\
0\text{\ \ \ \ \ \ \ \ \ \ \ \ \ \ otherwise}%
\end{array}%
\right.
\end{equation*}%
i.e.,%
\begin{equation}
B_{k,n}=\left[
\begin{array}{cccccc}
t_{1} & -t_{2} & 0 & 0 & \cdots  & 0 \\
1 & t_{1} & -t_{2} & 0 & \cdots  & 0 \\
\frac{t_{3}}{t_{2}^{2}} & 1 & t_{1} & -t_{2} & \cdots  & 0 \\
\vdots  & \vdots  & \vdots  & \vdots  &  & \vdots  \\
\frac{t_{k}}{t_{2}^{k-1}} & \frac{t_{k-1}}{t_{2}^{k-2}} & \frac{t_{k-2}}{%
t_{2}^{k-3}} & \frac{t_{k-3}}{t_{2}^{k-4}} & \cdots  & 0 \\
0 & \frac{t_{k}}{t_{2}^{k-1}} & \frac{t_{k-1}}{t_{2}^{k-2}} & \frac{t_{k-2}}{%
t_{2}^{k-3}} & \cdots  & 0 \\
& \vdots  & \vdots  & \vdots  & \ddots  &  \\
0 & 0 & 0 & \cdots  & \cdots  & t_{1}%
\end{array}%
\right]   \label{beka}
\end{equation}%
then%
\begin{equation*}
\det (B_{k,n})=F_{k,n+1}(t).
\end{equation*}%
where $t_{0}=1.$
\end{thrm}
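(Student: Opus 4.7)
\bigskip

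The plan is to mirror the strategy of Theorem 2.2, applying the Hessenberg determinant expansion formula of Theorem 2.1 together with induction on $n$, and then verifying that the sign-weighted products collapse to the coefficients $t_1, t_2, \dots, t_k$ appearing in the recurrence (5) for $F_{k,n}(t)$.

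First I would set up the induction: the base case $n=1$ gives $\det(B_{k,1}) = t_1 = F_{k,2}(t)$, which matches (5). For the inductive step, assume $\det(B_{k,j}) = F_{k,j+1}(t)$ for all $j \leq m$, and apply Theorem 2.1 to $\det(B_{k,m+1})$. Since $B_{k,n}$ is banded with $b_{ij}=0$ whenever $i-j \geq k$, the sum over $r$ in the expansion (6) truncates: the terms with $r \leq m-k+1$ vanish because $b_{m+1,r}=0$, so only the terms $r = m-k+2, \dots, m$ remain, exactly as in the proof of Theorem 2.2.

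Next I would compute the surviving products. The superdiagonal entries satisfy $b_{j,j+1} = -t_2$, so $\prod_{j=r}^{m} b_{j,j+1} = (-t_2)^{m-r+1} = (-1)^{m-r+1} t_2^{m-r+1}$, while $b_{m+1,r} = \frac{t_{m-r+2}}{t_2^{(m-r+1)}}$ for $r$ in the relevant range. Multiplying by the sign $(-1)^{m+1-r}$ from (6), the powers $(-1)^{m+1-r}(-1)^{m-r+1} = (-1)^{2(m-r+1)} = 1$ cancel, and the powers of $t_2$ cancel against $t_2^{-(m-r+1)}$, leaving the clean expression $t_{m-r+2} \det(B_{k,r-1})$.

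Then the expansion reduces to
\begin{equation*}
\det(B_{k,m+1}) = t_1 \det(B_{k,m}) + \sum_{r=m-k+2}^{m} t_{m-r+2} \det(B_{k,r-1}) = t_1 \det(B_{k,m}) + t_2 \det(B_{k,m-1}) + \cdots + t_k \det(B_{k,m-k+1}).
\end{equation*}
By the induction hypothesis, this is $t_1 F_{k,m+1}(t) + t_2 F_{k,m}(t) + \cdots + t_k F_{k,m-k+2}(t)$, which by the defining recurrence (5) equals $F_{k,m+2}(t)$, completing the induction. The only non-routine step is bookkeeping the sign and $t_2$ cancellations; unlike Theorem 2.2 where $\sqrt{-1}$ squared produced the required sign, here the $(-t_2)$ on the superdiagonal plays both roles simultaneously, so verifying that the expected simplification still goes through is the main point to check carefully.
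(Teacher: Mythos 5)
Your proposal is correct and follows essentially the same route as the paper's own proof: induction on $n$ via the Hessenberg expansion of Theorem 2.1, truncation of the sum using the band structure, and cancellation of the signs and powers of $t_2$ to recover the recurrence (5). No gaps.
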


\bigskip

\begin{proof}
\bigskip To prove $\det (B_{k,m})=F_{k,m+1}(t)$, we use the mathematical induction on $m$. The result is true for $%
m=1$ by hypothesis.

Using Theorem (2.1) we have
\begin{eqnarray*}
\det (B_{m+1,k}) &=&b_{m+1,m+1}\det
(B_{k,m})+\sum\limits_{r=1}^{m}((-1)^{m+1-r}b_{m+1,r}\prod%
\limits_{j=r}^{m}b_{j,j+1}\det (B_{r-1,k})) \\
&=&t_{1}\det
(B_{k,m})+\sum\limits_{r=1}^{m-k+1}((-1)^{m+1-r}b_{m+1,r}\prod%
\limits_{j=r}^{m}b_{j,j+1}\det (B_{r-1,k})) \\
&&+\sum\limits_{r=m-k+2}^{m}((-1)^{m+1-r}b_{m+1,r}\prod%
\limits_{j=r}^{m}b_{j,j+1}\det (B_{r-1,k})) \\
&=&t_{1}\det (B_{k,m})+\sum\limits_{r=m-k+2}^{m}((-1)^{m+1-r}.\frac{t_{m-r+2}%
}{t_{2}^{(m-r+1)}}\prod\limits_{j=r}^{m}(-t_{2})\det (B_{r-1,k})) \\
&=&t_{1}\det (B_{k,m}) \\
&&+\sum\limits_{r=m-k+2}^{m}((-1)^{m+1-r}.\frac{t_{m-r+2}}{t_{2}^{(m-r+1)}}%
.(-1)^{m+1-r}t_{2}^{(m-r+1)}\det (B_{r-1,k})) \\
&=&t_{1}\det (B_{k,m})+\sum\limits_{r=m-k+2}^{m}(t_{m-r+2}.\det
(B_{r-1,k}))
\\
&=&t_{1}\det (B_{k,m})+t_{2}\det (B_{k,m-1})+\cdots +t_{k}\det
(B_{k,m-(k-1)}).
\end{eqnarray*}%
From the hypothesis and the definition of generalized Fibonacci polynomials we obtain%
\begin{equation*}
\det (Q_{m+1,k})=t_{1}F_{k,m+1}+t_{2}F_{k,m}+\cdots
+t_{k}F_{k,m-(k-2)}=F_{k,m+2}.
\end{equation*}%
Therefore, the result is true for all positive integers.
\end{proof}

\bigskip

\begin{exam}
\bigskip We obtain $6$-th generalized Fibonacci polynomial for $k=4$, using
Theorem 2.5%
\begin{equation*}
F_{4,6}(t)=\det \left[
\begin{array}{ccccc}
t_{1} & -t_{2} & 0 & 0 & 0 \\
1 & t_{1} & -t_{2} & 0 & 0 \\
\frac{t_{3}}{t_{2}^{2}} & 1 & t_{1} & -t_{2} & 0 \\
\frac{t_{4}}{t_{2}^{3}} & \frac{t_{3}}{t_{2}^{2}} & 1 & t_{1} & -t_{2} \\
0 & \frac{t_{4}}{t_{2}^{3}} & \frac{t_{3}}{t_{2}^{2}} & 1 & t_{1}%
\end{array}%
\right] =2t_{1}t_{4}+2t_{2}t_{3}+t_{1}^{5}+3t_{1}t_{2}^{2}+3t_{1}^{2}t_{3}+%
\allowbreak 4t_{1}^{3}t_{2}.
\end{equation*}
\end{exam}

\bigskip

\begin{cor}
\bigskip $\left[ 10\right] $Let $k\geq 2$ be an integer$,$ $f_{k,n}$ be the
generalized order-$k$ Fibonacci numbers (1) and $M_{k,n}=(m_{ij})$ be an $%
n\times n$ lower Hessenberg matrix such that%
\begin{equation*}
m_{ij}=\left\{
\begin{array}{l}
-1\text{ \ \ \ \ \ \ \ \ \ \ \ if \ \ \ }j=i+1, \\
1\text{\ \ \ \ \ \ \ \ \ \ \ \ \ if\ \ \ \ \ }0\leq i-j<k\text{,} \\
0\text{\ \ \ \ \ \ \ \ \ \ \ \ \ \ otherwise}%
\end{array}%
\right.
\end{equation*}%
then%
\begin{equation*}
\det (M_{k,n})=f_{k,k+n-1}
\end{equation*}
\end{cor}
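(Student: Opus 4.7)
The plan is to obtain Corollary 2.7 as a direct specialization of Theorem 2.5, exactly parallel to how Corollary 2.3 was derived from Theorem 2.2. No new induction or computation is needed.

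First I would verify that the matrix $M_{k,n}$ coincides with $B_{k,n}$ under the substitution $t_i = 1$ for all $1 \le i \le k$ (with the convention $t_0 = 1$). Looking at the three cases in the definition of $B_{k,n}$: the superdiagonal entry $-t_2$ becomes $-1$, which matches $m_{ij} = -1$ for $j = i+1$; the lower-triangular entry $t_{i-j+1}/t_2^{(i-j)}$ becomes $1/1^{(i-j)} = 1$ in the range $0 \le i-j < k$, matching $m_{ij} = 1$; and the zero entries match trivially. Hence $B_{k,n}\big|_{t_i = 1} = M_{k,n}$, so by Theorem 2.5,
\begin{equation*}
\det(M_{k,n}) = F_{k,n+1}(t)\big|_{t_i=1}.
\end{equation*}

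Next I would invoke part $(iv)$ of Remark 1.1, which gives $F_{k,n}(t) = f_{k,k+n-2}$ when all $t_i = 1$. Applying this with index $n+1$ in place of $n$ yields
\begin{equation*}
F_{k,n+1}(t)\big|_{t_i=1} = f_{k,k+(n+1)-2} = f_{k,k+n-1},
\end{equation*}
which combined with the previous display gives $\det(M_{k,n}) = f_{k,k+n-1}$, as claimed.

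There is essentially no obstacle here: the only thing to be careful about is making the entry-by-entry substitution in $B_{k,n}$ explicit so that the reader sees $M_{k,n}$ emerge, and then correctly tracking the index shift in Remark 1.1(iv). Since Theorem 2.5 has already been proven in full generality for arbitrary coefficients $t_i$, the corollary reduces to a one-line specialization followed by the polynomial-to-integer dictionary.
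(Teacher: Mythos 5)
Your proposal is correct and follows exactly the paper's own route: the paper's entire proof is ``It is direct from Theorem 2.5 for $t_{i}=1$,'' and you simply spell out the entry-by-entry identification $B_{k,n}\big|_{t_i=1}=M_{k,n}$ together with the index shift from Remark 1.1(iv). No differences worth noting.
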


\bigskip

\begin{proof}
It is direct from Theorem 2.5 for $t_{i}=1.$
\end{proof}

\begin{cor}
\bigskip \bigskip\ If we rewrite Theorem (2.2) and Theorem (2.5) \newline
i) for $t_{i}=c_{j}$ $(1\leq i,j\leq k),$ we obtain%
\begin{equation*}
\det (Q_{k,n})=f_{k,n}^{1\text{ }}
\end{equation*}%
and%
\begin{equation*}
\det (B_{k,n})=f_{k,n}^{1\text{ }}
\end{equation*}%
respectively, \newline
ii) for $t_{1}=2$ and $t_{i}=1$ for $2\leq i\leq k,$ we obtain%
\begin{equation*}
\det (Q_{k,n})=p_{k,n+1}^{k\text{ }}
\end{equation*}%
and%
\begin{equation*}
\det (B_{k,n})=p_{k,n+1}^{k\text{ }}
\end{equation*}%
respectively, \newline
iii) for $t_{1}=0$ and $t_{i}=1$ for $2\leq i\leq k,$ we obtain%
\begin{equation*}
\det (Q_{k,n})=v_{k,n+1}^{k\text{ }}
\end{equation*}%
and%
\begin{equation*}
\det (B_{k,n})=v_{k,n+1}^{k\text{ }}
\end{equation*}%
respectively. Where $f_{k,n}^{1\text{ }}$, $p_{k,n}^{k\text{ }}$, $v_{k,n}^{k%
\text{ }}$ be the $k$ sequences of generalized order-$k$ Fibonacci, Pell and
Van der Laan numbers.  Matrices $Q_{k,n}$ and $B_{k,n}$ are as in (7) and
(8), respectively.
\end{cor}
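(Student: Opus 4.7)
The plan is to obtain Corollary 2.8 as a direct consequence of Theorems 2.2 and 2.5 combined with the identifications already recorded in Remark 1.1. The strategy is to observe that each of the three cases (i)--(iii) corresponds to one of the specializations of the parameter vector $t=(t_1,\ldots,t_k)$ listed in Remark 1.1, and that Theorems 2.2 and 2.5 express $F_{k,n+1}(t)$ as $\det(Q_{k,n})$ and $\det(B_{k,n})$ respectively for arbitrary parameters $t_i$. Hence substituting the parameter values into $F_{k,n+1}(t)$ on one side and into the Hessenberg entries on the other will yield the claimed identities.

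First I would handle part (i): substitute $t_i = c_i$ (for $1\leq i\leq k$) into the matrix $Q_{k,n}$ of (\ref{kuka}) and into $B_{k,n}$ of (\ref{beka}); by Theorems 2.2 and 2.5 both determinants equal $F_{k,n+1}(t)$ evaluated at this choice of parameters; then Remark 1.1(i) identifies $F_{k,n+1}(t)$ with $f_{k,n}^{1}$, so $\det(Q_{k,n})=\det(B_{k,n})=f_{k,n}^{1}$. Next for part (ii) I would substitute $t_1=2$ and $t_i=1$ for $2\leq i\leq k$ into the same matrices, invoke Theorems 2.2 and 2.5 to conclude that both determinants equal $F_{k,n+1}(t)$, and then apply Remark 1.1(ii) to rewrite this as $p_{k,n+1}^{k}$. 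Part (iii) is handled identically, using $t_1=0$ and $t_i=1$ for $2\leq i\leq k$ together with Remark 1.1(iii) to obtain $v_{k,n+1}^{k}$.

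Because Theorems 2.2 and 2.5 have already been established for arbitrary parameters $t_i$ (with $t_2\neq 0$ so that the entries $t_{r-s+1}/t_2^{(r-s)}$ are defined), no additional induction or matrix manipulation is needed; the corollary is purely a matter of tracking the substitutions. The only mild obstacle is a bookkeeping check: one must verify that the index shift between the statement of Remark 1.1 (which gives, e.g., $F_{k,n}(t)=p_{k,n}^{k}$) and the output of Theorems 2.2 and 2.5 (which produce $F_{k,n+1}(t)$) is applied consistently, so that the subscript on the right-hand side is $n+1$ for the Pell and Van der Laan cases and $n$ for the $k$SO$k$F case (where Remark 1.1(i) shifts the index by one). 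Once this indexing is reconciled, the identities in (i)--(iii) are immediate and the proof is complete.
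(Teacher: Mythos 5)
Your proposal is correct and matches the paper's (implicit) argument: the paper offers no separate proof for this corollary, treating it exactly as you do --- as a direct substitution of the parameter values into Theorems 2.2 and 2.5 followed by the identifications of Remark 1.1, including the index shift $F_{k,n+1}(t)=f_{k,n}^{1}$ in case (i) versus $F_{k,n+1}(t)=p_{k,n+1}^{k}$ and $v_{k,n+1}^{k}$ in cases (ii) and (iii). Your added remark that $t_{2}\neq 0$ is needed for the matrix entries to be defined is a sensible point the paper leaves unstated.
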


\bigskip

\section{The permanent representations}

\bigskip

Let $A=(a_{i,j})$ be a square matrix of order $n$ over a ring R. The
permanent of $A$ is defined by%
\begin{equation*}
\text{per}(A)=\sum\limits_{\sigma \in
S_{n}}\prod\limits_{i=1}^{n}a_{i,\sigma (i)}
\end{equation*}%
where $S_{n}$ denotes the symmetric group on $n$ letters.

Let $A_{i,j}$ be the $(i,j)$-th minor of matrix $A.$ Then%
\begin{equation*}
\text{per}(A)=\sum\limits_{k=1}^{n}a_{i,k}\text{per}(A_{i,k})=\sum%
\limits_{k=1}^{n}a_{k,j}\text{per}(A_{k,j})
\end{equation*}%
for any $i,j$.

\bigskip

\begin{thrm}
$\left[ 10\right] $Let $A_{n}$ be $n\times n$ lower Hessenberg matrix for
all $n\geq 1$ and define per$(A_{0})=1.$ Then,%
\begin{equation*}
\text{per}(A_{1})=a_{11}
\end{equation*}%
and for $n\geq 2$%
\begin{equation}
\text{per}(A_{n})=a_{n,n}\text{per}(A_{n-1})+\sum\limits_{r=1}^{n-1}(a_{n,r}%
\prod\limits_{j=r}^{n-1}a_{j,j+1}\text{per}(A_{r-1})).
\end{equation}
\end{thrm}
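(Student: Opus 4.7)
The plan is to mirror the argument for the determinant version (Theorem 2.1), but to expand the permanent rather than the determinant and drop all sign factors. Induction on $n$ is natural: the base case $\text{per}(A_1) = a_{11}$ is part of the statement, so the work happens in the inductive step that establishes the recurrence.

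First, I would expand $\text{per}(A_n)$ along the last row using the standard row-expansion identity for permanents:
\begin{equation*}
\text{per}(A_n) = \sum_{k=1}^{n} a_{n,k}\,\text{per}(A_{n,k}),
\end{equation*}
where $A_{n,k}$ denotes the $(n-1)\times(n-1)$ submatrix obtained by deleting row $n$ and column $k$ from $A_n$.

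Next, I would analyse the structure of each $A_{n,k}$. Since $a_{ij} = 0$ whenever $j > i+1$, this minor is block upper triangular: its top-left $(k-1)\times(k-1)$ block is exactly $A_{k-1}$; its top-right $(k-1)\times(n-k)$ block is identically zero (any entry $a_{i,j}$ inside it satisfies $i \leq k-1$ and $j \geq k+1$, so $j \geq i+2$); and its bottom-right $(n-k)\times(n-k)$ block is itself lower triangular with diagonal entries $a_{k,k+1},\, a_{k+1,k+2},\, \ldots,\, a_{n-1,n}$. Invoking the fact that the permanent of a block triangular matrix with one vanishing off-diagonal block factorises as the product of the permanents of its diagonal blocks, together with the observation that the permanent of a triangular matrix is simply the product of its diagonal entries, one obtains
\begin{equation*}
\text{per}(A_{n,k}) = \text{per}(A_{k-1}) \cdot \prod_{j=k}^{n-1} a_{j,j+1}.
\end{equation*}

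Substituting this back, isolating the $k = n$ term (whose product is empty and hence $1$, yielding $a_{n,n}\,\text{per}(A_{n-1})$), and renaming the summation index from $k$ to $r$ in the remaining terms gives exactly the stated recurrence. The step I expect to be the main obstacle is justifying the block factorisation of the permanent, since there is no sign-based Leibniz argument to appeal to as in the determinant case. However, it follows directly from the definition: any permutation $\sigma$ which sends a row index in the top block to a column index in the bottom block necessarily hits an entry of the zero off-diagonal block and contributes nothing, so the permanent sum collapses onto permutations preserving each block, exactly as happens for determinants.
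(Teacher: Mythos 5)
Your proof is correct and complete: the Laplace expansion of the permanent along the last row, the identification of each minor $A_{n,k}$ as a block matrix with zero top-right block, and the factorisation $\text{per}(A_{n,k})=\text{per}(A_{k-1})\prod_{j=k}^{n-1}a_{j,j+1}$ all check out, and the block-factorisation step is justified exactly as you say. Note that the paper itself gives no proof of this theorem (it is quoted from reference $[10]$), so there is nothing to compare against; also, the induction you announce at the outset is never actually needed, since the row expansion yields the recurrence directly as an identity.
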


\bigskip

\begin{thrm}
\bigskip \bigskip Let $k\geq 2$ be an integer, $F_{k,n}(t)$ be the
generalized Fibonacci Polynomial and $H_{k,n}=(h_{rs})$ be an
$n\times n$
lower Hessenberg matrix such that%
\begin{equation*}
h_{rs}=\left\{
\begin{array}{l}
i^{r-s}.\frac{t_{r-s+1}}{t_{2}^{(r-s)}}\text{ \ \ \ \ \ \ \ \ if \ }-1\leq
r-s<k\text{ \ \ \ \ \ \ \ \ \ \ \ \ \ \ \ \ \ \ \ \ \ \ \ \ \ \ \ \ \ \ } \\
0\text{ \ \ \ \ \ \ \ \ \ \ \ otherwise\ \ \ \ \ \ \ \ \ \ \ \ \ \ \ \ \ \ \
\ \ \ \ \ \ \ \ \ \ \ \ \ \ \ }%
\end{array}%
\right.
\end{equation*}%
i.e.,%
\begin{equation}
H_{k,n}=\left[
\begin{array}{cccccc}
t_{1} & -it_{2} & 0 & 0 & \cdots  & 0 \\
i & t_{1} & -it_{2} & 0 & \cdots  & 0 \\
i^{2}\frac{t_{3}}{t_{2}^{2}} & i & t_{1} & -it_{2} & \cdots  & 0 \\
\vdots  & \vdots  & \vdots  & \vdots  &  & \vdots  \\
i^{k-1}\frac{t_{k}}{t_{2}^{k-1}} & i^{k-2}\frac{t_{k-1}}{t_{2}^{k-2}} &
i^{k-3}\frac{t_{k-2}}{t_{2}^{k-3}} & i^{k-4}\frac{t_{k-3}}{t_{2}^{k-4}} &
\cdots  & 0 \\
0 & i^{k-1}\frac{t_{k}}{t_{2}^{k-1}} & i^{k-2}\frac{t_{k-1}}{t_{2}^{k-2}} &
i^{k-3}\frac{t_{k-2}}{t_{2}^{k-3}} & \cdots  & 0 \\
& \vdots  & \vdots  & \vdots  & \ddots  &  \\
0 & 0 & 0 & \cdots  & \cdots  & t_{1}%
\end{array}%
\right]
\end{equation}%
then%
\begin{equation*}
\text{per}(H_{k,n})=F_{k,n+1}(t)
\end{equation*}%
where $t_{0}=1$ and $i=\sqrt{-1}.$
\end{thrm}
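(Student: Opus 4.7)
The plan is to mimic the structure of the proof of Theorem 2.2, replacing the determinant expansion by the permanent expansion from Theorem 3.1 and showing that the modification of the superdiagonal entry from $it_2$ to $-it_2$ exactly compensates for the absence of the alternating sign $(-1)^{n-r}$ in the permanent formula. So the proof is by induction on $n$, with base case $n=1$ yielding $\mathrm{per}(H_{k,1})=t_1=F_{k,2}(t)$.

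For the inductive step, I would apply Theorem 3.1 to $H_{k,m+1}$ to write
\[
\mathrm{per}(H_{k,m+1}) = t_1\,\mathrm{per}(H_{k,m}) + \sum_{r=1}^{m}\Bigl(h_{m+1,r}\prod_{j=r}^{m}h_{j,j+1}\,\mathrm{per}(H_{k,r-1})\Bigr).
\]
As in the $Q_{k,n}$ argument, the entries $h_{m+1,r}$ vanish for $r\le m-k+1$ (since then $m+1-r\ge k$), so only the range $m-k+2\le r\le m$ contributes. On that range the superdiagonal product telescopes as
\[
\prod_{j=r}^{m}h_{j,j+1}=(-it_2)^{m+1-r},
\]
while $h_{m+1,r}=i^{m+1-r}\dfrac{t_{m-r+2}}{t_2^{m-r+1}}$.

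The key identity to verify is then
\[
h_{m+1,r}\prod_{j=r}^{m}h_{j,j+1}=i^{m+1-r}(-i)^{m+1-r}\,t_{m-r+2}=t_{m-r+2},
\]
since $i\cdot(-i)=1$. This collapses the sum to $\sum_{r=m-k+2}^{m}t_{m-r+2}\,\mathrm{per}(H_{k,r-1})$, which after reindexing is $t_2\,\mathrm{per}(H_{k,m-1})+\cdots+t_k\,\mathrm{per}(H_{k,m-k+1})$. The inductive hypothesis then rewrites everything in terms of $F_{k,\bullet}(t)$, and the defining recurrence (5) closes the argument with $\mathrm{per}(H_{k,m+1})=F_{k,m+2}(t)$.

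There is really no serious obstacle here, since the calculation is structurally parallel to the $\det(Q_{k,n})$ case; the only subtlety worth flagging is the bookkeeping of the signs and powers of $i$, where one must be careful that it is the pairing $i^{m+1-r}\cdot(-i)^{m+1-r}$ (rather than $i^{m+1-r}\cdot i^{m+1-r}$ together with an external $(-1)^{m+1-r}$ from the determinant formula) that yields the clean cancellation. This is exactly where the sign change on the superdiagonal of $H_{k,n}$ versus $Q_{k,n}$ pays for the loss of the alternating sign in passing from $\det$ to $\mathrm{per}$.
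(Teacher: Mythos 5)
Your proposal is correct and follows exactly the route the paper intends: the paper's own proof of this theorem simply states that it is similar to the proof of Theorem 2.2 using the permanent expansion of Theorem 3.1, and your argument supplies precisely those details, including the key cancellation $i^{m+1-r}(-i)^{m+1-r}=1$ that replaces the pairing of $(-1)^{m+1-r}$ with $i^{2(m+1-r)}$ in the determinant case. No gaps.
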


\bigskip

\begin{proof}
\bigskip Since the proof is similar to the proof of Theorem (2.2) by using Theorem
(3.1) we omit the detail.
\end{proof}

\bigskip

\begin{exam}
\bigskip We obtain $7$-th generalized Fibonacci Polynomials for $k=5$, using
Theorem 3.2%
\begin{eqnarray*}
F_{5,7} &=&\text{per}\left[
\begin{array}{cccccc}
t_{1} & -it_{2} & 0 & 0 & 0 & 0 \\
i & t_{1} & -it_{2} & 0 & 0 & 0 \\
\frac{-t_{3}}{t_{2}^{2}} & i & t_{1} & -it_{2} & 0 & 0 \\
\frac{-it_{4}}{t_{2}^{3}} & \frac{-t_{3}}{t_{2}^{2}} & i & t_{1} & -it_{2} &
0 \\
\frac{t_{5}}{t_{2}^{4}} & \frac{-it_{4}}{t_{2}^{3}} & \frac{-t_{3}}{t_{2}^{2}%
} & i & t_{1} & -it_{2} \\
0 & \frac{t_{5}}{t_{2}^{4}} & \frac{-it_{4}}{t_{2}^{3}} & \frac{-t_{3}}{%
t_{2}^{2}} & i & t_{1}%
\end{array}%
\right] \\
&=&2t_{1}t_{5}+2t_{2}t_{4}+6t_{1}t_{2}t_{3}+t_{2}^{3}+t_{3}^{2}+t_{1}^{6}+%
\allowbreak
3t_{1}^{2}t_{4}+4t_{1}^{3}t_{3}+5t_{1}^{4}t_{2}+6t_{1}^{2}t_{2}^{2}.
\end{eqnarray*}
\end{exam}

\begin{cor}
\bigskip \bigskip \bigskip $\left[ 10\right] $Let $k\geq 2$ be an integer$,$
$f_{k,n}$ be the generalized order-$k$ Fibonacci numbers  and $%
H_{k,n}=(h_{rs})$ be an $n\times n$ lower Hessenberg matrix such that%
\begin{equation*}
h_{rs}=\left\{
\begin{array}{l}
i^{r-s}\text{\ \ \ \ \ \ \ if \ }-1\leq r-s<k\text{ \ \ \ \ \ \ \ \ \ \ \ \
\ \ \ \ \ \ \ \ \ \ \ \ \ \ \ \ \ \ } \\
0\text{ \ \ \ \ \ \ \ \ \ \ \ otherwise\ \ \ \ \ \ \ \ \ \ \ \ \ \ \ \ \ \ \
\ \ \ \ \ \ \ \ \ \ \ \ \ \ \ }%
\end{array}%
\right.
\end{equation*}%
then%
\begin{equation*}
\text{per}(H_{k,n})=f_{k,k+n-1}
\end{equation*}
\end{cor}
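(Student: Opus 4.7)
The plan is to deduce this corollary as an immediate specialization of Theorem 3.2. First I would substitute $t_i = 1$ for all $1 \leq i \leq k$ (together with the convention $t_0 = 1$) into the matrix $H_{k,n}$ of Theorem 3.2. Under this substitution, the general entry $i^{r-s}\cdot\tfrac{t_{r-s+1}}{t_2^{(r-s)}}$ for $-1 \leq r-s < k$ collapses to $i^{r-s}$, and all other entries remain $0$, so the Hessenberg matrix of Theorem 3.2 coincides exactly with the matrix $H_{k,n} = (h_{rs})$ described in the corollary.

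Next I would apply Theorem 3.2 directly, which yields $\text{per}(H_{k,n}) = F_{k,n+1}(t)$ for $t = (1,1,\dots,1)$. To translate this back to the generalized order-$k$ Fibonacci numbers, I would invoke part (iv) of Remark 1.1, which asserts that specializing $t_i = 1$ reduces $F_{k,n}(t)$ to $f_{k,k+n-2}$. Substituting $n+1$ in place of $n$ gives $F_{k,n+1}(t) = f_{k,k+n-1}$, which is the desired identity.

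Since both steps are mere substitutions into results already established, there is no substantive obstacle. The only items worth checking carefully are that the entry formula truly trivializes to $i^{r-s}$ for every $(r,s)$ inside the Hessenberg band (in particular, the denominator $t_2^{(r-s)}$ must become $1$, which holds since $t_2 = 1$), and that the index shift from Remark 1.1(iv) is applied with argument $n+1$ rather than $n$ so that the exponent $k+n-1$ in the corollary is recovered correctly.
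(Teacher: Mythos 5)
Your proposal is correct and follows exactly the route the paper takes: its proof of this corollary is the one-line observation that the result is immediate from Theorem 3.2 with $t_{i}=1$, combined (implicitly) with Remark 1.1(iv) for the index shift $F_{k,n+1}(t)=f_{k,k+n-1}$. You have simply spelled out the same specialization in more detail.
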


\bigskip

\begin{proof}
\bigskip It is direct from Theorem 3.2 for $t_{i}=1.$
\end{proof}

\begin{thrm}
Let $k\geq 2$ be an integer$,$ $F_{k,n}(t)$ be the generalized
Fibonacci Polynomial and $L_{k,n}=(l_{ij})$ be an $n\times n$ lower
Hessenberg matrix
such that%
\begin{equation*}
l_{ij}=\left\{
\begin{array}{l}
\frac{t_{i-j+1}}{t_{2}^{(i-j)}}\text{\ \ \ \ \ \ \ \ \ \ \ if \ }-1\leq i-j<k%
\text{,\ \ \ \ \ \ \ \ \ \ \ \ \ \ \ \ \ \ } \\
0\text{ \ \ \ \ \ \ \ \ \ \ \ \ \ otherwise\ \ \ \ \ \ \ \ \ \ \ \ \ \ \ \ \
\ \ \ \ \ \ \ \ \ \ \ \ \ \ \ \ \ }%
\end{array}%
\right.
\end{equation*}%
i.e.,%
\begin{equation}
L_{k,n}=\left[
\begin{array}{cccccc}
t_{1} & t_{2} & 0 & 0 & \cdots  & 0 \\
1 & t_{1} & t_{2} & 0 & \cdots  & 0 \\
\frac{t_{3}}{t_{2}^{2}} & 1 & t_{1} & t_{2} & \cdots  & 0 \\
\vdots  & \vdots  & \vdots  & \vdots  &  & \vdots  \\
\frac{t_{k}}{t_{2}^{k-1}} & \frac{t_{k-1}}{t_{2}^{k-2}} & \frac{t_{k-2}}{%
t_{2}^{k-3}} & \frac{t_{k-3}}{t_{2}^{k-4}} & \cdots  & 0 \\
0 & \frac{t_{k}}{t_{2}^{k-1}} & \frac{t_{k-1}}{t_{2}^{k-2}} & \frac{t_{k-2}}{%
t_{2}^{k-3}} & \cdots  & 0 \\
& \vdots  & \vdots  & \vdots  & \ddots  &  \\
0 & 0 & 0 & \cdots  & \cdots  & t_{1}%
\end{array}%
\right] .
\end{equation}%
where $t_{0}=1,$ then%
\begin{equation*}
\text{per}(L_{k,n})=F_{k,n+1}.
\end{equation*}
\end{thrm}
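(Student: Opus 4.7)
The plan is to prove $\mathrm{per}(L_{k,n}) = F_{k,n+1}(t)$ by induction on $n$, following essentially the same template as Theorem 2.5 and Theorem 3.2, with Theorem 3.1 replacing Theorem 2.1 as the expansion tool. The base case $n=1$ is immediate since $L_{k,1} = [t_1]$ and $F_{k,2}(t) = t_1$. For the inductive step, I would assume $\mathrm{per}(L_{k,j}) = F_{k,j+1}(t)$ for all $j \leq m$ and compute $\mathrm{per}(L_{k,m+1})$ via the Hessenberg permanent recurrence (9).

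Reading off the entries of $L_{k,n}$: the superdiagonal satisfies $l_{j,j+1} = t_2$, the diagonal satisfies $l_{m+1,m+1} = t_1$, and $l_{m+1,r} = \tfrac{t_{m-r+2}}{t_2^{m-r+1}}$ when $0 \leq m+1-r < k$ while $l_{m+1,r} = 0$ when $m+1-r \geq k$. Plugging these into (9) and splitting the sum at $r = m-k+2$ kills the range $1 \leq r \leq m-k+1$, leaving
\begin{equation*}
\mathrm{per}(L_{k,m+1}) = t_1\,\mathrm{per}(L_{k,m}) + \sum_{r=m-k+2}^{m} \frac{t_{m-r+2}}{t_2^{m-r+1}} \prod_{j=r}^{m} t_2 \cdot \mathrm{per}(L_{k,r-1}).
\end{equation*}
The product $\prod_{j=r}^{m} t_2$ contributes exactly $t_2^{m-r+1}$, which cancels the denominator and gives
\begin{equation*}
\mathrm{per}(L_{k,m+1}) = t_1\,\mathrm{per}(L_{k,m}) + t_2\,\mathrm{per}(L_{k,m-1}) + \cdots + t_k\,\mathrm{per}(L_{k,m-k+1}).
\end{equation*}

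Applying the induction hypothesis term by term and then invoking the defining recurrence (5) for generalized Fibonacci polynomials yields $\mathrm{per}(L_{k,m+1}) = t_1 F_{k,m+1}(t) + t_2 F_{k,m}(t) + \cdots + t_k F_{k,m-k+2}(t) = F_{k,m+2}(t)$, completing the induction. There is no real obstacle here; the only thing to be slightly careful about is that permanents lack the alternating signs that determinants have, which is precisely why the matrix $L_{k,n}$ does \emph{not} need the $i = \sqrt{-1}$ factors appearing in $H_{k,n}$ or the $-t_2$ superdiagonal of $B_{k,n}$ — the telescoping $\tfrac{t_{m-r+2}}{t_2^{m-r+1}} \cdot t_2^{m-r+1} = t_{m-r+2}$ is clean on its own. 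Hence the proof is a direct, sign-free analogue of Theorem 2.5, and can be stated briefly by referring to Theorem 2.5 via Theorem 3.1, as the authors do for Theorem 3.2.
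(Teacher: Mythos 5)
Your proposal is correct and follows exactly the route the paper intends: the paper's own proof of this theorem simply states that it is similar to the proof of Theorem 2.5 carried out with the permanent expansion of Theorem 3.1, which is precisely the induction you have written out, with the same splitting of the sum at $r=m-k+2$ and the same cancellation $\frac{t_{m-r+2}}{t_{2}^{m-r+1}}\cdot t_{2}^{m-r+1}=t_{m-r+2}$. Your added remark about why no signs or factors of $i=\sqrt{-1}$ are needed here is a correct observation, not a deviation.
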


\bigskip

\begin{proof}
\bigskip This is similar to the proof of \ Theorem 2.5
using Theorem 3.1.
\end{proof}

\bigskip

\begin{cor}
$\left[ 9\right] $\bigskip Let $k\geq 2$ be an integer$,$ $f_{k,n}$ be the
generalized order-$k$ Fibonacci numbers  and $D_{k,n}=(d_{ij})$ be an $%
n\times n$ lower Hessenberg matrix such that%
\begin{equation*}
d_{ij}=\left\{
\begin{array}{l}
1\text{\ \ \ \ \ \ \ \ \ \ \ if \ }-1\leq i-j<k\text{,\ \ \ \ \ \ \ \ \ \ \
\ \ \ \ \ \ \ } \\
0\text{ \ \ \ \ \ \ \ \ \ \ \ \ \ otherwise\ \ \ \ \ \ \ \ \ \ \ \ \ \ \ \ \
\ \ \ \ \ \ \ \ \ \ \ \ \ \ \ \ \ }%
\end{array}%
\right.
\end{equation*}%
then%
\begin{equation*}
\text{per}(D_{k,n})=f_{k,k+n-1}.
\end{equation*}
\end{cor}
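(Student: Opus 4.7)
The plan is to obtain this corollary as an immediate specialization of Theorem~3.5, following the same template that produced Corollaries~2.4 and 2.7 from Theorems~2.2 and 2.5. Concretely, I would set $t_i = 1$ for every $1 \leq i \leq k$ (retaining the convention $t_0 = 1$) throughout Theorem~3.5 and then match both the matrix and the polynomial that result.

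On the matrix side, the nonzero entries of $L_{k,n}$ are $l_{ij} = t_{i-j+1}/t_{2}^{\,i-j}$ for $-1 \leq i-j < k$. After the substitution each such entry collapses to $1/1 = 1$, in exactly the same index range that defines the nonzero entries of $D_{k,n}$. Hence $L_{k,n}$ becomes $D_{k,n}$ entry by entry, and in particular $\text{per}(L_{k,n}) = \text{per}(D_{k,n})$.

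On the polynomial side, Theorem~3.5 yields $\text{per}(L_{k,n}) = F_{k,n+1}(t)$. Applying Remark~1.1(iv), which asserts $F_{k,m}(t) = f_{k,k+m-2}$ when every $t_i = 1$, with $m = n+1$ converts the right-hand side into $f_{k,k+n-1}$. Combining the two observations gives $\text{per}(D_{k,n}) = f_{k,k+n-1}$, as desired.

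The argument is a routine specialization, so there is no real obstacle; the only place requiring care is the index bookkeeping, namely verifying that setting $m = n+1$ in Remark~1.1(iv) produces the correct subscript $k+n-1$, and that the banded pattern of $L_{k,n}$ really does match that of $D_{k,n}$ under $t_i \equiv 1$ on every off-diagonal band from the superdiagonal down to the $k$-th subdiagonal.
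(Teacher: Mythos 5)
Your proposal is correct and follows exactly the paper's route: the paper's entire proof is ``It is direct from Theorem 3.5 for $t_{i}=1$,'' and you have merely spelled out the two checks (that $L_{k,n}$ specializes to $D_{k,n}$ and that Remark~1.1(iv) with $m=n+1$ gives $f_{k,k+n-1}$) that this one-liner leaves implicit. The index bookkeeping is right.
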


\begin{proof}
\bigskip It is direct from Theorem 3.5 for $t_{i}=1.$
\end{proof}

\begin{cor}
If we rewrite Theorem (3.2) and Theorem (3.5) \newline
i) for $t_{i}=c_{j}$ $(1\leq i, j\leq k), $ we obtain%
\begin{equation*}
\text{per}(H_{k,n})=f_{k,n}^{\text{ }1}
\end{equation*}%
and%
\begin{equation*}
\text{per}(L_{k,n})=f_{k,n}^{\text{ }1}
\end{equation*}%
respectively, \newline
ii) for $t_{1}=2$ and $t_{i}=1$ $(2\leq i\leq k),$ we obtain
\begin{equation*}
\text{per}(H_{k,n})=p_{k,n+1}^{\text{ }k}
\end{equation*}%
and%
\begin{equation*}
\text{per}(L_{k,n})=p_{k,n+1}^{\text{ }k}
\end{equation*}%
respectively, \newline
iii) for $t_{1}=0$ and $t_{i}=1$ $(2\leq i\leq k),$ we obtain
\begin{equation*}
\text{per}(H_{k,n})=v_{k,n+1}^{\text{ }k}
\end{equation*}%
and%
\begin{equation*}
\text{per}(L_{k,n})=v_{k,n+1}^{\text{ }k}
\end{equation*}%
respectively. Where $f_{k,n}^{k\text{ }}$, $p_{k,n}^{k\text{ }}$, $v_{k,n}^{k%
\text{ }}$ be the $k$ sequences of generalized order-$k$ Fibonacci,
Pell and Van der Laan numbers. Matrices $H_{k,n}$ and $L_{k,n}$  are
as in (10) and (11), respectively.
\end{cor}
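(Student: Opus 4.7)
The plan is to obtain this corollary by pure specialization: apply Theorems 3.2 and 3.5 with generic parameters, then substitute the prescribed values of $t_1,\ldots,t_k$ and identify the resulting polynomial via Remark 1.1. No new induction or combinatorial argument is needed; this is a bookkeeping corollary that strings together results already established.

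First, I would start from the identities $\text{per}(H_{k,n}) = F_{k,n+1}(t)$ (Theorem 3.2) and $\text{per}(L_{k,n}) = F_{k,n+1}(t)$ (Theorem 3.5), which hold for arbitrary parameter vectors $t=(t_1,\ldots,t_k)$. For part (i), setting $t_i = c_j$ for $1 \le i,j \le k$ turns $H_{k,n}$ and $L_{k,n}$ into matrices whose permanents equal $F_{k,n+1}(t)$ under this substitution, and Remark 1.1(i) then replaces $F_{k,n+1}(t)$ with $f_{k,(n+1)-1}^{1} = f_{k,n}^{1}$, giving both permanent identities at once. Parts (ii) and (iii) are handled in exactly the same manner: substitute $t_1=2$, $t_i=1$ (respectively $t_1=0$, $t_i=1$) in the matrices $H_{k,n}$ and $L_{k,n}$, apply the theorems to obtain $F_{k,n+1}(t)$, and then invoke Remark 1.1(ii) or 1.1(iii) to rewrite this as $p_{k,n+1}^{k}$ or $v_{k,n+1}^{k}$ respectively.

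The only subtlety to check is that the specialized Hessenberg matrices remain well-defined, since the entries contain denominators of the form $t_{2}^{r-s}$. In cases (ii) and (iii) one has $t_2 = 1$ so the denominators are harmless, and in case (i) one tacitly assumes $c_2 = t_2 \neq 0$ (as is standard in this setting). Once this is noted, each of the six identities claimed in the statement reads off directly by chaining the relevant theorem with the matching clause of Remark 1.1, so I expect no real obstacle in carrying out the argument — the main "work" is simply verifying the index shift $F_{k,n+1}(t) \leftrightarrow f_{k,n}^{1}$, $p_{k,n+1}^{k}$, $v_{k,n+1}^{k}$ matches Remark 1.1 with $n$ replaced by $n+1$.
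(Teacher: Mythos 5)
Your proposal is correct and matches the paper's (implicit) argument: the corollary is stated as a direct specialization of Theorems 3.2 and 3.5 combined with Remark 1.1, with exactly the index shift $F_{k,n+1}(t)\mapsto f_{k,n}^{1},\,p_{k,n+1}^{k},\,v_{k,n+1}^{k}$ that you verify. Your remark about the denominators $t_2^{\,r-s}$ being harmless under the substitutions is a sensible extra check not made explicit in the paper.
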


\bigskip 

\bigskip

\end{document}